 \newtheorem{theorem}{Theorem}[section]
 \newtheorem{definition}{Definition}[section]
\newtheorem{remark}[theorem]{Remark}
\newtheorem{lemma}[theorem]{Lemma}
 \let\originalleft\left
 \let\originalright\right
 \renewcommand{\left}{\mathopen{}\mathclose\bgroup\originalleft}
 \renewcommand{\right}{\aftergroup\egroup\originalright}
 \newcommand{\Addresses}{{
 		\footnote{

 			\noindent	 \textsuperscript{1,2} Department of Mathematics, Indian Institute of Technology Roorkee, Roorkee, 247667, India.	
 			
 			\noindent  \textit{e-mail\textsuperscript{1}:} \texttt{p\_yadav@ma.iitr.ac.in}
 			
 			\noindent  \textit{e-mail\textsuperscript{2}:} \texttt{tanuja.srivastava@ma.iitr.ac.in}


 }}}
\begin{document}
 	\title[]{The Maximum Likelihood Degree of Farlie-Gumbel-Morgenstern Bivariate Exponential Distribution\Addresses}
 	\author [Pooja Yadav.  Tanuja Srivastava]{Pooja Yadav\textsuperscript{1}.  Tanuja Srivastava\textsuperscript{2}}
 	\maketitle
    
 \begin{abstract}
		The maximum likelihood degree of a statistical model refers to the number of solutions, where the derivative of the log-likelihood function is zero, over the complex field. This paper examines the maximum likelihood degree of the parameter in Farlie-Gumbel-Morgenstern bivariate exponential distribution.
	\end{abstract}
    
	\section{Introduction}
	
	The probability density function (PDF) of Farlie-Gumbel-Morgenstern (FGM) or Gumbel's Type-II bivariate exponential distributed random vector $X=(x,y)^\top$, $(x, y)$ is in the first quadrant of $\mathbb{R}^2$ with association parameter $\theta$, is
	\begin{equation}
		\label{eq:1}
		f(x, y)=  e^{-(x+y)} \left[1 +\theta (2 e^{-x}-1)(2 e^{-y}-1)\right],
	\end{equation}
	where, $\theta \in [-1,1] \subset \mathbb{R}$, and the marginal distributions of each $x$ and $y$ are standard exponential. If $\theta =0 $, then $x$ and $y$ are mutually independent  \cite{KBJ}. 
	
	The FGM bivariate exponential distribution is commonly used in reliability, queueing theory, and actuarial science fields, and some of the applications of this distribution are given in  \cite{LB}. The simulation of random samples of the FGM bivariate exponential distribution is given in \cite{HNW}.
	
	Let $X_{1}=(x_1,y_1)^\top, X_{2}=(x_2,y_2)^\top, \ldots, X_{n}=(x_n,y_n)^\top$ be random sample from FGM bivariate exponential distribution, then the maximum likelihood estimator of $\theta$ is that value of $\theta \in [-1,1]$,  which maximizes the likelihood function given the data, if it exists.
	
	The likelihood function for $\theta$ is
	\begin{align*}
		L(\theta |X_1,X_2,\ldots X_n) &=\prod_{i=1}^{n} f(x_{i},y_{i})\\
		&= \prod_{i=1}^{n} e^{-(x_{i}+y_{i})} \left[1+\theta (2 e^{-x_i}-1)(2 e^{-y_i}-1) \right], 
	\end{align*}
	and, the log-likelihood function (up to an additive constant) is
	\begin{equation*}
		\ell(\theta)= \sum_{i=1}^{n} \log \left(1+\theta (2 e^{-x_i}-1)(2 e^{-y_i}-1)\right).
	\end{equation*}
	
	Therefore, the score equation of $\theta$ is 
	\begin{equation}
		\label{eq:2}
		\sum_{i=1}^{n} \frac{(2 e^{-x_i}-1)(2 e^{-y_i}-1)}{1+ \theta (2 e^{-x_i}-1)(2 e^{-y_i}-1)} =0.
	\end{equation}
	This equation is the summation of rational functions in $\theta$, which will have more than one solution and does not have a closed-form solution. So, it is necessary to apply some computational algebraic techniques to solve this.
	
	Since $\mathbb{R}$ is not an algebraically closed field, the solutions of the score equation are considered over the complex field $\mathbb{C}$.
	
	\section{The ML-degree of the association parameter of FGM bivariate exponential distribution}
	\begin{definition}[\textbf{Maximum likelihood degree}\cite{CHKS}]
		\label{def:1}
		The maximum likelihood degree or ML-degree of the association parameter $\theta$ of this model is the number of solutions of the score equation \eqref{eq:2}, counted with multiplicity over the complex field.
	\end{definition}
	For more details about the ML-degree, the readers can see \cite{CHKS}, \cite{DSS}.

	Let $\frac{1}{c_{i}}=(2 e^{-x_i}-1)(2 e^{-y_i}-1)$, for every $i=1,2,\ldots,n$. Then, the score equation \eqref{eq:2} can be rewritten as
	\begin{equation}
		\label{eq:3}
		\sum_{i=1}^{n} \frac{1}{(c_{i}+\theta) } =0,
	\end{equation}
	or
	\begin{equation*}
		\frac{h(\theta)}{k(\theta)} =0,
	\end{equation*}
	with 
	\begin{equation}
		\label{eq:4}
		h(\theta) = \displaystyle \sum_{i=1}^{n} \left( \displaystyle \prod_{j=1	j\ne i}^{n} \left( \theta +c_{j} \right) \right),
	\end{equation}
	and 
	\begin{equation}
		\label{eq:5}
		k(\theta)=\displaystyle \prod_{i=1}^{n} \left(\theta + c_{i} \right).
	\end{equation} 
	
	The solutions of the score equation are the zeros of $h(\theta)$. However, these solutions may contain the points where the score equation is not defined due to the cleared denominator. Therefore, the solutions of the score equation are the zeros of $h(\theta)$ that are not the zeros of $k(\theta)$. For the ML-degree, the common zeros of $h(\theta)$ and $k(\theta)$ should be removed from the solutions of $h(\theta)$.
	
	Since $h(\theta)$ is a polynomial of degree $n-1$, it will have $n-1$ zeros in the complex field, counted with multiplicity. The solutions of the score equation are in the variety of $h(\theta)$ (referred as $V(h)$). Hence, the ML-degree of $\theta \le (n-1)$. For the ML-degree of $\theta$, the points of concern are 
	\[V(h)\setminus \left(V(h)\cap V(k)\right) =V(h)\setminus V(h,k).\]	
	
	\begin{theorem}
		\label{thm:2}
		$V(h,k)\ne \emptyset$ if and only if there exists $l\ne m\in \{1,2,\ldots,n\}$ such that $c_{l} =c_{m}$.
	\end{theorem}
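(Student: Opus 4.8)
The plan is to characterize the common zeros of $h$ and $k$ directly by evaluating $h$ at the roots of $k$. Since $k(\theta)=\prod_{i=1}^{n}(\theta+c_i)$, its zero set is exactly $\{-c_1,-c_2,\ldots,-c_n\}$, so every element of $V(h,k)$ must equal $-c_r$ for some $r\in\{1,2,\ldots,n\}$. The whole question therefore reduces to determining precisely when $h(-c_r)=0$.

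First I would substitute $\theta=-c_r$ into the sum $h(\theta)=\sum_{i=1}^{n}\prod_{j\neq i}(\theta+c_j)$ and inspect each summand. The key observation is that whenever $i\neq r$, the index $j=r$ occurs in the inner product $\prod_{j\neq i}(\theta+c_j)$, contributing a factor $(\theta+c_r)$ which vanishes at $\theta=-c_r$. Hence every term with $i\neq r$ is zero, and only the $i=r$ term survives, yielding
\[
	h(-c_r)=\prod_{j\neq r}(c_j-c_r).
\]
This collapse of the $n$-term sum to a single product is the heart of the argument; once it is established, both implications are immediate.

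With this identity in hand, the biconditional follows at once. For the reverse direction, if $c_l=c_m$ for some $l\neq m$, then taking $r=l$ makes the factor $(c_m-c_r)=0$, so $h(-c_r)=0$ while $k(-c_r)=0$ trivially, placing $-c_r$ in $V(h,k)$ and showing it is nonempty. For the forward direction, any element of a nonempty $V(h,k)$ has the form $-c_r$, and then $h(-c_r)=\prod_{j\neq r}(c_j-c_r)=0$ forces $c_j=c_r$ for some $j\neq r$, giving the required coincidence with $l=r$, $m=j$.

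I do not anticipate a genuine obstacle: the only point requiring care is the bookkeeping of which factor vanishes in which summand, together with checking that exactly one summand remains. Since the $c_i$ lie in $\mathbb{C}$ and may be complex or repeated, I would phrase the evaluation purely algebraically; the identity $h(-c_r)=\prod_{j\neq r}(c_j-c_r)$ is formal and valid over any field, so no analytic input about the $x_i,y_i$ is needed.
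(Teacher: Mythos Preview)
Your proposal is correct and follows essentially the same approach as the paper: both arguments hinge on the observation that at $\theta=-c_r$ every summand of $h$ with $i\ne r$ contains the vanishing factor $(\theta+c_r)$, so $h(-c_r)=\prod_{j\ne r}(c_j-c_r)$, from which both implications follow. Your presentation is slightly tidier in that you establish this identity once and reuse it for the converse, whereas the paper re-expands $h(\alpha_1)$ into three groups of terms for that direction, but the substance is the same.
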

	
	\begin{proof}.
		Suppose $ h(\theta)$ and $k(\theta)$ have a common zero, say $\alpha $, that is,
		$h(\alpha)= 0$ and $k(\alpha)=0$.
		Consider
		\[k(\alpha)= \displaystyle \prod_{i=1}^{n} ( \alpha+c_{i}) =0,\]
		then $\exists$ some $l \in \{1,2,\ldots n\}$ such that $\alpha+c_{l}=0$ or $\alpha=-c_{l}$.
		
		Now, 
		\[h(\alpha)=  \displaystyle \sum_{i=1}^{n} \left(\displaystyle \prod_{j=1	j\ne i}^{n} (\alpha+c_{j}) \right) =\displaystyle \prod_{j=1	j\ne l}^{n} (\alpha+c_{j}),\]
		therefore, $h(\alpha)=0 \implies \alpha+c_{m}=0$, or $\alpha=-c_{m}, m\ne l.$
		
		Hence, $ c_{l}=c_{m}$.
		
		Conversely, suppose there exists $l\ne m\in \{1,2,\ldots,n\}$ such that $c_{l} =c_{m}$. Then, polynomials $\theta+c_{l} $ and $ \theta+c_{m}$, $l\ne m$ are same, and $\alpha_{1}= -c_{l}=-c_{m}$ is the zero of both polynomials $\theta+c_{l}$ and $\theta+c_{m} $.
		Now,
		\[k(\alpha_{1})= \displaystyle \prod_{i=1}^{n} ( \alpha_{1}+c_{i})=0,\]
		and
		\begin{align*}
			h(\alpha_{1})=  \displaystyle \sum_{i=1}^{n} \left(\displaystyle \prod_{j=1	j\ne i}^{n} (\alpha_{1}+c_{j}) \right),
		\end{align*}

  \begin{multline*}
    h(\alpha_{1})= \left( \displaystyle \prod_{j=1	j\ne l}^{n} (\alpha_{1}+c_{j}) \right)+ \left( \displaystyle \prod_{j=1	j\ne m}^{n} (\alpha_{1}+c_{j}) \right)\\
    +  \displaystyle \sum_{i=1, i\ne l,m}^{n} \left(\displaystyle \prod_{j=1	j\ne i}^{n} (\alpha_{1}+c_{j}) \right)=0.
   \end{multline*}
        
		Hence, $\alpha_{1}\in V(h,k)$ and $V(h,k)\ne \emptyset$.
	\end{proof}
	
	As mentioned above, the ML-degree of the association parameter $\theta$ of FGM bivariate exponential distribution is determined by the number of elements in $V(h)\setminus V(h,k)$, counted with multiplicity. Therefore, the number of elements in $V(h,k)$ is important for calculating the ML-degree. The possibility for $V(h,k)$ to be non-empty is discussed in the previous theorem. In the following lemma, the multiplicity of common zeros of $h(\theta)$ and $k(\theta)$ is counted in $h(\theta)$. 
	
	\begin{lemma}
		\label{thm:3}
		The multiplicity of a common zero of $h(\theta)$ and $k(\theta)$ in $h(\theta)$ is $n_{1}-1$, if exactly $n_{1}$ $(2\le n_{1} \le n)$ $c_{i}$'s are the same. 
	\end{lemma}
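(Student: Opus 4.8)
The plan is to exploit the fact that $h(\theta)$ is precisely the derivative of $k(\theta)$. Differentiating the product \eqref{eq:5} by the product rule yields $k'(\theta) = \sum_{i=1}^{n} \prod_{j\ne i}(\theta + c_j)$, which is exactly $h(\theta)$ as defined in \eqref{eq:4}. Reducing the multiplicity computation to a statement about $k$ and its derivative $k'$ makes the relevant factorization transparent, so the whole argument collapses to an elementary computation.

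Next I would relabel the indices so that the repeated value is $c_1 = c_2 = \cdots = c_{n_1} = c$, the associated common zero being $\alpha = -c$. The hypothesis that \emph{exactly} $n_1$ of the $c_i$ coincide means $c_i \ne c$ for every $i > n_1$, and this is the point where that hypothesis is used. I then factor
\[ k(\theta) = (\theta + c)^{n_1}\, g(\theta), \qquad g(\theta) = \prod_{i=n_1+1}^{n}(\theta + c_i), \]
where crucially $g(-c) = \prod_{i=n_1+1}^{n}(c_i - c) \ne 0$, since each factor is nonzero by the "exactly $n_1$" condition.

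Differentiating and extracting the common power of $(\theta + c)$ gives
\[ h(\theta) = k'(\theta) = (\theta + c)^{n_1 - 1}\bigl[\, n_1\, g(\theta) + (\theta + c)\, g'(\theta)\,\bigr]. \]
Evaluating the bracketed factor at $\theta = -c$ leaves only $n_1\, g(-c)$, which is nonzero. Hence $(\theta+c)^{n_1-1}$ divides $h(\theta)$ while $(\theta+c)^{n_1}$ does not, so the multiplicity of $\alpha = -c$ as a zero of $h(\theta)$ is exactly $n_1 - 1$, as claimed.

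The only real obstacle is the bookkeeping of the "exactly $n_1$" condition: it is precisely what guarantees $g(-c)\ne 0$, and without it the bracket could vanish at $-c$ and inflate the multiplicity. If one prefers to avoid the identity $h = k'$, the same conclusion follows by reading the factorization directly off \eqref{eq:4}: split the sum into the $n_1$ terms that omit one of the repeated factors, each contributing $(\theta+c)^{n_1-1} g(\theta)$, and the remaining $n - n_1$ terms, which each carry the higher power $(\theta+c)^{n_1}$. Summing shows the lowest-order contribution is $n_1 (\theta+c)^{n_1-1} g(\theta)$ with $g(-c)\ne 0$, giving multiplicity $n_1 - 1$ again; but invoking $h = k'$ is the cleaner route.
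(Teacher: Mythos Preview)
Your proof is correct. The paper's own argument is in fact the alternative you sketch in your final paragraph: it splits the sum defining $h(\theta)$ into the $n_1$ terms that omit one of the repeated factors and the remaining terms, and then reads off the factorization $h(\theta)=(\theta+c_1)^{n_1-1}h_1(\theta)$ with $h_1(-c_1)\neq 0$ directly. Your main route via the identity $h=k'$ is a genuine (if modest) improvement: it explains \emph{why} the factorization has the shape it does---a root of $k$ of multiplicity $n_1$ must be a root of $k'$ of multiplicity exactly $n_1-1$---and reduces the bookkeeping to a single application of the product rule rather than a manual regrouping of a double sum. The paper's approach, by contrast, stays closer to the explicit form of $h$ and never mentions the derivative relation, so it is slightly more self-contained at the cost of a bit more index manipulation. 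Either way, the crux is the same observation you isolate: the ``exactly $n_1$'' hypothesis is what forces $g(-c)\neq 0$ and pins down the multiplicity precisely.
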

	\begin{proof}
		Given that there are exaclty $n_{1}$ $c_{i}$'s are the same, let denote them $c_{1},c_{2},\ldots,c_{n_{1}}$, that is, $c_{1}=c_{2}=\ldots=c_{n_{1}}$, then by \hyperref[thm:2]{theorem \ref{thm:2}}, $-c_{1}$ is a common zero of $h(\theta)$ and $k(\theta)$.
		Next, the multiplicity of $-c_{1}$ is counted in $h(\theta)$ as follows:
		\begin{align*}
			h(\theta)& = \displaystyle \sum_{i=1}^{n} \left( \displaystyle \prod_{j=1	j\ne i}^{n} \left( \theta +c_{j} \right) \right)\\
			& = \displaystyle \sum_{i=1}^{n_1} \left(\displaystyle \prod_{j=1	j\ne i}^{n} \left( \theta +c_{j} \right) \right)+\displaystyle \sum_{i=n_{1}}^{n} \left(  \displaystyle \prod_{j=1	j\ne i}^{n} \left( \theta +c_{j} \right) \right),
		\end{align*}
       
		or
		\begin{multline*}
			h(\theta)	=\left(\displaystyle \sum_{i=1}^{n_1} \left(\displaystyle \prod_{j=1	j\ne i}^{n_{1}} \left( \theta +c_{j} \right)\right)  \left(\displaystyle \prod_{j=n_{1}}^{n} \left( \theta +c_{j} \right) \right) \right)\\ +\left(\displaystyle \sum_{i=n_{1}}^{n} \left(  \displaystyle \prod_{j=1}^{n_{1}} \left( \theta +c_{j} \right) \right)  \left(  \displaystyle \prod_{j=n_{1}	j\ne i}^{n} \left( \theta +c_{j} \right) \right)\right),
		\end{multline*}
         \newpage
		or
		\begin{align*}
			h(\theta)&= n_{1} \left( \theta +c_{1} \right)^{n_{1}-1}  \left(\displaystyle \prod_{j=n_{1}}^{n} \left( \theta+c_{j} \right) \right)+ \left( \theta+c_{1} \right)^{n_{1}} \left(\displaystyle \sum_{i=n_{1}}^{n} \left( \displaystyle \prod_{j=n_{1}	j\ne i}^{n} \left( \theta +c_{j} \right) \right)\right)\\
			&=\left( \theta+c_{1}\right)^{n_{1}-1} h_{1}(\theta),
		\end{align*}
		where
		\begin{equation*}
			h_{1}(\theta)=n_{1} \left(\displaystyle \prod_{j=n_{1}}^{n} \left( \theta +c_{j}  \right) \right)+ \left( \theta+c_{1} \right) \left(\displaystyle \sum_{i=n_{1}}^{n} \left(  \displaystyle \prod_{j=n_{1}	j\ne i}^{n} \left( \theta +c_{j}  \right) \right)\right)
		\end{equation*}
		does not have $-c_{1}$ as a zero, since $c_{j}$ are distinct from $c_{1}$ $\forall j \in \{n_{1},\ldots,n\}$.
		
		Hence, the multiplicity of $-c_{1}$ in $h(\theta)$ is $n_{1}-1$.
	\end{proof}

	\begin{theorem}[\textbf{ML Degree}]
		\label{thm:4}
		Let \eqref{eq:3} have $p$ distinct $c_{i}$'s, and each $c_{i}$'s have multiplicity $n_{i}$ $(1\le n_{i} \le n)$. If there are exactly $l$ $n_{i}$'s $( >1)$ such that $\sum_{i=1}^{l}n_{i}=m$ $(\le n)$, then the ML-degree of the association parameter $\theta$ in FGM bivariate exponential distribution is $n+l-m-1$.
	\end{theorem}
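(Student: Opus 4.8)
The plan is to obtain the ML-degree as a simple bookkeeping computation: count all zeros of $h(\theta)$ with multiplicity, then subtract the total multiplicity of exactly those zeros that must be discarded because they also belong to $V(k)$. First I would recall the observation made just before the statement, that $h(\theta)$ is a polynomial of degree $n-1$, so over $\mathbb{C}$ it has precisely $n-1$ zeros counted with multiplicity. Since the ML-degree is the number of elements of $V(h)\setminus V(h,k)$ counted with multiplicity, it equals $n-1$ minus the total multiplicity, as zeros of $h(\theta)$, of the points lying in $V(h,k)$.

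Next I would pin down $V(h,k)$ and the multiplicity to be removed. By Theorem~\ref{thm:2}, a point $-c_i$ lies in $V(h,k)$ if and only if the value $c_i$ is repeated, i.e. its multiplicity $n_i$ satisfies $n_i>1$. Under the hypotheses there are exactly $l$ such distinct repeated values, and since they are distinct the corresponding points $-c_i$ form $l$ distinct elements of $V(h,k)$, which exhaust it. For each such value I would invoke Lemma~\ref{thm:3} with $n_1$ replaced by $n_i$: every other $c_j$ is distinct from this particular $c_i$, so the lemma applies and the multiplicity of $-c_i$ as a zero of $h(\theta)$ is exactly $n_i-1$. As the $l$ points are distinct, their multiplicities in $h$ are independent and add, so the total multiplicity removed is
\[
\sum_{i=1}^{l}(n_i-1)=\left(\sum_{i=1}^{l} n_i\right)-l = m-l.
\]
Subtracting then yields the ML-degree as $(n-1)-(m-l)=n+l-m-1$, as claimed.

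The step I expect to require the most care is the additivity used above, namely the claim that Lemma~\ref{thm:3} still applies verbatim to each repeated value when several groups of equal $c_i$'s coexist. The lemma was established for a single repeated value under the tacit hypothesis that all the remaining $c_j$ differ from it, and I must confirm this hypothesis persists in the general configuration. The point is that the cofactor $h_1(\theta)$ produced in the proof of Lemma~\ref{thm:3} is nonzero at $-c_i$ precisely because $c_j\neq c_i$ for every index $j$ outside the group equal to $c_i$, a condition that holds irrespective of whether those other $c_j$ are themselves mutually repeated. Hence factoring $(\theta+c_i)^{n_i-1}$ out of $h(\theta)$ at each of the $l$ distinct points $-c_i$ is legitimate and the local multiplicities at these distinct points do not interfere, which is exactly what licenses summing $n_i-1$ over the $l$ repeated values to get the total multiplicity $m-l$ to delete.
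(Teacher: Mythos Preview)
Your proposal is correct and follows essentially the same route as the paper: identify $V(h,k)$ via Theorem~\ref{thm:2} as the set of $-c_i$ with $c_i$ repeated, apply Lemma~\ref{thm:3} to read off the multiplicity $n_i-1$ of each such point in $h(\theta)$, and subtract the total $\sum_{i=1}^{l}(n_i-1)=m-l$ from $n-1$. Your final paragraph, verifying that Lemma~\ref{thm:3} remains valid for each repeated value even when several distinct groups of equal $c_i$'s coexist, is a worthwhile clarification that the paper's proof leaves implicit.
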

	
	\begin{proof}
		Given that there are $p$ distinct $c_{i}$'s, say $c_{1},c_{2}, \ldots, c_{p}$ and say $c_{1},c_{2}, \ldots, c_{l}$ are repeated more than once, each with $n_{1},n_{2}, \ldots, n_{l}$ times, and $\sum_{i=1}^{l}n_{i}=m$. Since $c_{i}$'s are repeated, so by \hyperref[thm:2]{theorem \ref{thm:2}}, $V(h,k) \ne \emptyset$ and $V(h,k)=\{-c_{1},-c_{2},\ldots,-c_{l}\}$. Since, each $c_{j}$ is repeated with multiplicity $n_{j}\ge 2$, thus, by \hyperref[thm:3]{lemma \ref{thm:3}}, each $-c_{j}$ has multiplicity $n_{j}-1$ in $h(\theta)$ $\forall j \in\{1,2,\ldots,l\}$.
		
		The ML-degree of the association parameter $\theta$ in FGM bivariate exponential distribution is determined by the number of elements in $V(h)\setminus V(h,k)$ (counted with multiplicity), which is equal to
		\begin{equation*}
			(n-1)- \sum_{k=1}^{l}(n_{k}-1) = (n-1)-(m-l)=n+l-(m+1).
		\end{equation*}
		
		Hence, the ML-degree of the association parameter $\theta$ is $n+l-m-1$.
	\end{proof}
	
	\begin{remark}
		If $l=0$ in \hyperref[thm:4]{theorem \ref{thm:4}}, then $m=0$, that is, all $c_{i}$'s are distinct. In this case, $V(h,k)=\emptyset$, so the ML-degree of the association parameter $\theta$ is $n-1$. If $l=1$ and $m=n$ in \hyperref[thm:4]{theorem \ref{thm:4}}, that is, all $c_{i}$'s are the same, then the score equation \eqref{eq:3} will not be valid, hence this case is excluded from \hyperref[thm:4]{theorem \ref{thm:4}}. For this particular case, where all $c_{i}$'s are the same, the likelihood function is $L(\theta)=(1+\frac{\theta}{c_{1}})^{n}$, which is either an increasing or decreasing function according to $c_{1}$ being positive and negative (respectively). Thus, in this case, the MLE will be either $1$ or $-1$ since $-1\le \theta\le 1$. When $m$ is equal to $n$, then the value of $l$ is at least $2$. Hence, the ML-degree of the association parameter $\theta$ is greater than or equal to $1$. 
	\end{remark}

    \section*{Acknowledgements}The first author would like to thank the University Grants Commission, India, for providing financial support.

\end{document}